\newtheorem{lemma}{Lemma}[section]
\newtheorem{theorem}[lemma]{Theorem}
\newtheorem{proposition}[lemma]{Proposition}
\newtheorem{corollary}[lemma]{Corollary}
\newtheorem{definition}[lemma]{Definition}
\newtheorem{fact}[lemma]{Fact}
\newtheorem{problem}[lemma]{Problem}
\theoremstyle{definition}
\def\({\bigl(}
\def\){\bigr)}
\def\N{I\!\!N}
\def\fr#1#2{\dfrac{\lower 1 pt\hbox{\mathstrut}#1}{\mathstrut #2}}
\def\N{I\!\!N}
\def\({\big(}
\def\){\big)}
\def\N{I\!\!N}
\def\B{\mathbf{B}}
\def\S{\mathbf{S}}
\def\EndDef{\end{definition}}
\def\Def{\begin{definition}}
\def\EndFact{\end{fact}}
\def\Fact{\begin{fact}}
\def\Pf{\begin{proof}}
\def\EndPf{\end{proof}}
\def\Lm{\begin{lemma}}
\def\EndLm{\end{lemma}}
\def\EndCor{\end{corollary}}
\def\Cor{\begin{corollary}}
\def\Thmn{\begin{theorem}}
\def\EndThm{\end{theorem}}
\def\Propn{\begin{proposition}}
\def\EndProp{\end{proposition}}
\newcommand\conv{\mathrm{conv}}
\renewcommand{\mod}{{/}}
\newcommand{\scp}{\mathrm{SCP}}
\newcommand{\gds}{\mathrm{GDS}}
\def\N{I\!\!N}
\begin{document}

\title{Note on Kadets Klee property and Asplund spaces}

\author{Petr H\'ajek}
\address{Mathematical Institute\\Czech Academy of Science\\\v Zitn\'a 25\\115 67 Praha 1\\Czech Republic\\
Department of Mathematics} 
\address{Faculty of Electrical Engineering\\ Czech Technical University in Prague\\ Zikova 4, 166 27\\ 
Prague, Czech Republic}
\email{hajek@math.cas.cz}
\thanks{The first named author was financially supported by a grant GACR 201/11/0345.}

\author{Jarno Talponen}
\address{Aalto University, Institute of Mathematics, P.O. Box 11100, FI-00076 Aalto, Finland}
\email{talponen@iki.fi}

\keywords{Asplund spaces, weak-star Kadets Klee property, weak-star-to-weak Kadets Klee property, Grothendieck spaces, renormings, duality mapping, WLD, weakly Lindel\"of determined, SCP, separable complementation property, coseparable subspaces}
\subjclass[2000]{Primary 46B03; 46B20}
\date{\today}

\thanks{The first author was supported in part by Institutional Research Plan AV0Z10190503 and GA\v CR P201/11/0345.
This paper was prepared as the second author enjoyed the warm hospitality of the Czech Academy of Sciences in Autumn
2011. The visit and research was supported in part by the V\"ais\"al\"a foundation.}

\begin{abstract}
A typical result in this note is that if $X$ is a Banach space which is a weak Asplund space and has the $\omega^*$-$\omega$-Kadets Klee property, then $X$ is already an Asplund space.
\end{abstract}

\maketitle

\section{Introduction}

Recall that a Banach space $(X,\|\cdot\|)$ (resp. its norm $\|\cdot\|$) is said to be locally
uniformly rotund (LUR for short) if for each $x,x_n\in X,\ n\in \N,$ with $\|x\|=\|x_n \|=1$ 
and $\lim_{n\to \infty}\|x_n + x\|=2$ it follows that $\lim_{n\to \infty}\|x_n - x\|=0$.

We say that a Banach space $(X,\|\cdot\|)$ has the Kadec property
(resp. Kadec-Klee property, KK for short)
if the norm and weak topologies
coincide on the unit sphere $S_X$ (resp. weakly convergent sequences from $S_X$ are norm convergent).

In \cite{tro3} it is shown that if $\ell_1\not\hookrightarrow X$ and $\|\cdot\|$ is KK then it 
is also Kadec. The Kadec property has been studied extensively in the literature,
and we refer to \cite{dgz} and \cite{motv} for the background. Let us mention only few of
the main results in this area which relate the Kadec and LUR properties.
It is easy to observe that if $\|\cdot\|$ is LUR then $\|\cdot\|$ has the Kadec property.
Troyanski proved \cite{tro1}, \cite{tro2} that $X$ has an equivalent LUR renorming if and only if
it has an equivalent Kadec and rotund renormings. Haydon \cite{hay}, \cite{haydon-trees} gave examples showing that
the rotundity assumption is essential, constructing trees such that $C(T)$ has KK but no rotund renorming. 
In \cite{motv} the authors prove that if $X$ has the RNP property (in particular if $X$ is a dual of some Asplund space) then $X$ has an LUR renorming if and only if it has a Kadec renorming.
In the present note we are interested in the dual situation.

\begin{definition}
Let $(X,\|\cdot\|)$ be a Banach space. We say that $(X^*,\|\cdot\|^*)$ has the

1. Dual Kadec property (K$^*$) if the $w^*$ and norm topologies coincide on the dual unit sphere
$S_{X^*}$.

2. Dual  weak Kadec property (wK$^*$) if the $w^*$ and weak topologies coincide on the dual
unit sphere $S_{X^*}$.

3. Dual  Kadec-Klee property (KK$^*$) if the $w^*$-convergent and norm convergent
sequences coincide on the dual unit sphere
$S_{X^*}$.

3. Dual  weak Kadec-Klee property (wKK$^*$) if the $w^*$-convergent and weak convergent
sequences coincide on the dual unit sphere
$S_{X^*}$.
\end{definition}

These notions are closely related to the Asplund property of $X$. In particular,
Namioka and Phelps \cite{np} showed that if $X^*$ has the K$^*$ then $X$ is an Asplund space.
Raja strengthened this by showing that $X^*$ has a dual LUR renorming (which implies
automatically the K$^*$ property and also that $X$
is an Asplund space) if and only if it has a K$^*$ renorming if and only if 
it has a wK$^*$ renorming \cite{r1}, \cite{r2}.

It is therefore natural to ask whether the formally weaker sequential versions of these properties
lead to some positive result. The problem whether the existence of a KK$^*$ renorming of $X^*$ implies that $X$ 
is an Asplund space has apparently been posed by Godefroy, and it seems to be open.
In our note we are going to address the still weaker property wKK$^*$ of $X^*$.
It follows that for every renorming of a given Grothendieck space $X$, its dual $X^*$
has the wKK$^*$. Hence the property may not imply that $X$ is Asplund, in general (e.g. $\ell_\infty$).

\subsection{Preliminaries}

We denote by $X$, $Y$, $Z$ and $E$ real Banach spaces. The closed unit ball and the unit sphere of $X$ are denoted by
$B_{X}$ and $S_{X}$, respectively. For suitable background information on general Banach space theory and notations we refer to \cite{fhhmz}.

We say that a subspace $Y\subset X$ is \emph{coseparable} in $X$ if $X\mod Y$ is separable (see \cite{talponen} for discussion). Following \cite{fabian_book} we call $X$ a Gateaux differentiability space ($\gds$), if each continuous convex function on $X$ is Gateaux differentiable in a dense set. If the sets in the previous definition are additionally
$G_{\delta}$-sets, then $X$ is a \emph{weak Asplund space}. A Banach space $X$ has the \emph{Grothendieck property} if in the dual $X^*$ the $w^*$-convergent and $w$-convergent sequences coincide. Recall that a Banach space has the \emph{$1$-separable complementation property} ($1$-$\scp$), if for each separable subspace $Y\subset X$ there exists a separable subspace $Z\subset X$ such that $Y\subset Z$ and $Z$ is contractively complemented in $X$. If $A\subset X$ is a subset, then we denote by $[A]$ the closed linear span of $A$. The duality mapping $J\colon \S_X \to 2^{\S_{X^*}}$ is given by $J(x)=\{x^* \in \S_{X^*}\colon x^* (x)=1\}$ for $x\in \S_X$.

\section{Results}

\begin{theorem}\label{thm: main}
Let $X$ be a Banach space which can be renormed in such a way that $X^*$ satisfies both the wKK$^*$ and one of the following conditions:
\begin{enumerate}
\item[$(i)$]{$J\colon \S_{X}\to 2^{\S_{X^*}}$ is $w^*$-first-countably valued.}
\item[$(ii)$]{The coseparable subspaces of $X$ are preserved in countable intersections.}
\item[$(iii)$]{$X$ has $1$-$\scp$.}
\item[$(iv)$]{$\B_{X^*}$ is $w^*$-countably compact (e.g. $X$ is a Gateaux differentiability space or weak Asplund)}.
\end{enumerate}
Then $X$ is an Asplund space.
\end{theorem}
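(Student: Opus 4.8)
The plan is to reduce the general problem to a separable one, since a Banach space is Asplund if and only if each of its separable subspaces has separable dual. So fix a separable subspace $Y_0\subset X$; I want to produce a separable subspace $Y_0\subset Z\subset X$ with $Z^*$ separable, which by a standard argument forces $Y_0^*$ separable as well (actually it suffices to bound the separable subspaces uniformly). The key point will be that, after the given renorming, the wKK$^*$ property lets us convert $w^*$-convergence of norm-one functionals into genuine $w$-convergence, and hence into norm convergence whenever we can simultaneously control the $w^*$-behaviour --- precisely the role of conditions $(i)$--$(iv)$.

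First I would record the underlying mechanism common to all four cases. Suppose $X^*$ has wKK$^*$ and $Z\subset X$ is a coseparable subspace of $X$; one expects that $Z^*$, which is a quotient of $X^*$, then inherits enough structure to be separable. More concretely, the heart of the matter should be: if $D\subset X$ is a countable set and $x^*_n\in \S_{X^*}$ with $x^*_n\to 0$ pointwise on $[D]$ and with some additional $w^*$-compactness or countability hypothesis on the relevant portion of $\B_{X^*}$, then $\|x^*_n|_{[D]}\|\to 0$. Indeed, passing to a subnet, any $w^*$-cluster point $x^*$ of $(x^*_n)$ vanishes on $[D]$; the extra hypothesis (first countability of the duality-map values, or $w^*$-countable compactness of $\B_{X^*}$, or $1$-$\scp$, or the intersection property of coseparable subspaces) is exactly what upgrades the net to a sequence converging $w^*$ to such an $x^*$, at which point wKK$^*$ gives weak convergence, and then --- since a weakly convergent sequence in a dual that also converges $w^*$ to the same limit on a coseparable subspace can be shown to converge in norm on that subspace --- we get $\|x^*_n|_{[D]}\|\to 0$.

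Next I would assemble this into a separable reduction. Start with separable $Y_0$; using the chosen condition, build an increasing sequence of separable subspaces $Y_0\subset Y_1\subset Y_2\subset\cdots$ and set $Z=\overline{\bigcup_n Y_n}$, arranging a closing-off property so that $Z$ is coseparable in $X$ (under $(ii)$ and $(iii)$ this is immediate; under $(iv)$ one uses $w^*$-countable compactness of $\B_{X^*}$ to run the same construction; under $(i)$ one feeds in countably many $w^*$-first-countable neighbourhood bases of duality functionals of a dense set of $\S_{Y_n}$). The mechanism of the previous paragraph then shows that on $Z$ every $w^*$-null sequence of restricted norm-one functionals is norm-null, which by a Riesz-type / sequential argument yields that $(B_{Z^*},w^*)$ is norm-separable, i.e. $Z^*$ is separable. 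Since $Y_0\subset Z$ and $Z^*$ separable implies $Y_0^*$ separable (restriction $Z^*\to Y_0^*$ is a quotient map), every separable subspace of $X$ has separable dual, so $X$ is Asplund.

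The main obstacle is the step that converts a $w^*$-null sequence of norm-one functionals, known to be $w$-null by wKK$^*$, into a norm-null sequence on the coseparable subspace: a priori a $w$-null sequence in $X^*$ need not be norm-null. The resolution must exploit coseparability together with the fact that the $w^*$ and $w$ limits coincide, so that on the separable quotient $X/Z$ the functionals carry no mass in the limit, while on $Z$ itself the weak convergence of the restrictions $x^*_n|_Z\in Z^*$ must be promoted to norm convergence --- this is where one genuinely needs the wKK$^*$ hypothesis to have been imposed on a norm for which $Z^*$ also enjoys a Kadec-Klee-type behaviour, and verifying that the construction can be closed off to guarantee this (simultaneously for all four conditions) is the delicate bookkeeping. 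Handling condition $(i)$ is the most technical, since there the control on $\B_{X^*}$ is only indirect, through the duality mapping on a dense subset of the sphere, and one must check that first-countability of those fibres suffices to extract the needed sequences.
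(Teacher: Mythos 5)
There is a genuine gap, and it sits exactly at the step you yourself flag as ``the main obstacle'': you need to promote a weakly convergent sequence of norm-one functionals to a norm-convergent one on the coseparable subspace, and no such implication is available. The wKK$^*$ hypothesis only identifies $w^*$-convergent with $w$-convergent sequences on $\S_{X^*}$; it gives no control whatsoever of the norm topology, and a weakly convergent sequence in a dual space is in general far from norm convergent (the unit vector basis of $\ell_2=\ell_2^*$ is weakly null and nowhere near norm null). Obtaining norm convergence from $w^*$-convergence is essentially the KK$^*$ version of the problem, which the paper explicitly records as open (Godefroy's question). Your ``resolution'' paragraph describes what would be needed but supplies no argument for it, and coseparability does not help: the restrictions $x_n^*|_Z$ form a weakly convergent sequence in $Z^*$ and nothing forces them to converge in norm.

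The missing idea that lets the theorem go through \emph{without any norm convergence} is the combination of Mazur's theorem with the Bishop--Phelps theorem. Fix a separable $Y\subset X$ and a countable $w^*$-dense set $\{y_n^*\}\subset\S_{Y^*}$ (available since $\B_{Y^*}$ is $w^*$-metrizable and $w^*$-separable). For each norm-attaining $y^*\in\S_{Y^*}$, say $y^*(y)=1$ with $y\in\S_Y$, extract a subsequence $y_{n_k}^*\stackrel{w^*}{\longrightarrow}y^*$; then --- and this is the only place conditions $(i)$--$(iv)$ are used --- produce Hahn--Banach extensions $z_m^*\in\S_{X^*}$ of a further subsequence which $w^*$-converge in $X^*$ to an extension $z^*$ of $y^*$. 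Now wKK$^*$ upgrades this to weak convergence, and Mazur's theorem places $z^*$ in $\overline{\conv}(z_m^*:m\in\N)$; restricting to $Y$ gives $y^*\in\overline{\conv}(y_n^*:n\in\N)$. Since norm-attaining functionals are norm-dense in $\S_{Y^*}$ by Bishop--Phelps, $\B_{Y^*}=\overline{\conv}(\{y_n^*\})$ is the norm-closed convex hull of a countable set, hence norm-separable, and $X$ is Asplund. Your overall frame (separable reduction, use of $(i)$--$(iv)$ to extract $w^*$-convergent subsequences of extensions) is the right one, but without the Mazur/Bishop--Phelps step the argument cannot be closed, and the norm-convergence route you propose instead cannot be repaired.
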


Observe that wKK$^*$ property of $X^*$ is clearly weaker than KK$^*$ property and the Grothendieck property. Since there exists non-Asplund Grothendieck spaces, e.g. $L^{\infty}$, we conclude that the assumptions $(i)$-$(iv)$ cannot be plainly removed in the theorem. Shortly we will discuss the conditions $(i)$ and $(ii)$. The first condition is of course much weaker than imposing $J$ to be single-valued, i.e. $X$ to be Gateaux smooth. Recall that WLD, Plichko and certain other classes of Banach spaces have $1$-$\scp$, see \cite[p.105]{bos}.

\begin{proposition}\label{prop}
Let $x^* \in \S_{X^*},\ x\in \S_{X}$ be such that $x^*(x)=1$. Let $(x_{n}^* )\subset \B_{X^* }$ be a sequence such that
$x^* \in \overline{(x_{n}^*)}^{w^*}$. Assume that there exists a sequence $(U_{n})$ of relatively $w^*$-open
subsets of $J(x)$ such that
\[\bigcup_{n}U_{n} = \bigcup_{n}\overline{U_{n}}^{w^*} = J(x)\setminus \{x^*\}.\]
Then there exists a subsequence $(n_{k})\subset \N$ such that $x_{n_{k}}^* \stackrel{w^*}{\longrightarrow}x^*$ as $k\to \infty$.
\end{proposition}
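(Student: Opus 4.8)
The plan is to argue by contradiction: suppose no subsequence of $(x_n^*)$ is $w^*$-convergent to $x^*$. Since $x^* \in \overline{(x_n^*)}^{w^*}$, the idea is to extract, via the $w^*$-open cover $(U_n)$ of $J(x)\setminus\{x^*\}$, a subsequence of $(x_n^*)$ that is "trapped" into an increasing union and then derive that this subsequence actually $w^*$-accumulates only at $x^*$, forcing $w^*$-convergence after a further passage to a subsequence.

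First I would observe that $\bigcup_n \overline{U_n}^{w^*} = J(x)\setminus\{x^*\}$ means that the $w^*$-closure of each $U_n$ inside $\overline{J(x)}^{w^*}$ can only add the single point $x^*$; equivalently, $\overline{U_n}^{w^*} \subset U_n \cup \{x^*\}$ for each $n$ (after replacing $U_n$ by $U_1 \cup \dots \cup U_n$ we may assume the $U_n$ are increasing, which does not affect the two displayed equalities). Next, I would pass to a subsequence of $(x_n^*)$ so that the behaviour relative to the sets $U_n$ stabilises. The key dichotomy is: either infinitely many of the $x_n^*$ lie outside every fixed $U_m$, or for each $m$ all but finitely many $x_n^*$ eventually land in $U_m$ is false — more carefully, I would consider for each $m$ whether the set $\{n : x_n^* \notin \overline{U_m}^{w^*}\}$ is infinite. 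If for some $m$ this set is finite, then a tail of $(x_n^*)$ lies in $\overline{U_m}^{w^*}$, which is a $w^*$-compact subset of $J(x)$; hence that tail has a $w^*$-cluster point in $\overline{U_m}^{w^*} \subset U_m \cup \{x^*\}$. A cluster point in $U_m$ would contradict $x^* \in \overline{(x_n^*)}^{w^*}$ being the ``only relevant'' limit — here I need to use that $x^*$ is isolated from $U_m$ in the sense above, together with $x^*(x)=1$ forcing any $w^*$-limit point of $(x_n^*)\subset B_{X^*}$ to lie in $J(x)$ (since $x_n^*(x)\to 1$ along a suitable subsequence because $x^*$ is a $w^*$-limit point and $x$ is a fixed functional; more precisely, pass to a subsequence on which $x_n^*(x)\to 1$, then every $w^*$-cluster point $y^*$ of that subsequence satisfies $y^*(x)=1$, i.e. $y^*\in J(x)$). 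So the cluster point must be $x^*$, and then by $w^*$-compactness of the tail plus the fact that $x^*$ is the unique cluster point, a subsequence $w^*$-converges to $x^*$, as desired.

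The remaining case is that for every $m$ the set $\{n : x_n^* \notin \overline{U_m}^{w^*}\}$ is infinite; here I would build a subsequence $(x_{n_k}^*)$ with $x_{n_k}^* \notin \overline{U_k}^{w^*}$. Now consider any $w^*$-cluster point $y^*$ of $(x_{n_k}^*)$; after first passing to a subsequence with $x_{n_k}^*(x) \to 1$ we get $y^* \in J(x)$. If $y^* \neq x^*$, then $y^* \in U_m$ for some $m$ by the covering hypothesis, so $y^*$ lies in the $w^*$-open set $U_m$; but for all $k \geq m$ we have $x_{n_k}^* \notin \overline{U_k}^{w^*} \supset \overline{U_m}^{w^*} \supset U_m$ (using monotonicity), contradicting that $y^*$ is a cluster point of $(x_{n_k}^*)$. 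Hence $x^*$ is the only $w^*$-cluster point of $(x_{n_k}^*)$ in $B_{X^*}$, and since $B_{X^*}$ is $w^*$-compact this forces $x_{n_k}^* \stackrel{w^*}{\to} x^*$.

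I expect the main obstacle to be the bookkeeping around the ``$x^*$ is isolated from each $U_m$'' step: the hypothesis only gives $\bigcup_n \overline{U_n}^{w^*} = J(x)\setminus\{x^*\}$, so one must be slightly careful that $\overline{U_m}^{w^*}$ (closure taken in the $w^*$-compact set $\overline{J(x)}^{w^*} = J(x)$, which is $w^*$-closed in $B_{X^*}$ since $x$ is continuous) does not contain $x^*$ — this is exactly what the second equality in the display rules out, and combining it with the monotonisation $U_n \leadsto U_1\cup\cdots\cup U_n$ is the crux. The other point requiring mild care is the reduction to a subsequence on which $x_n^*(x) \to 1$ before taking cluster points, so that all cluster points land in $J(x)$; this is legitimate because $x^*$ being a $w^*$-limit point of $(x_n^*)$ and $x^*(x)=1$ lets us choose $n_j$ with $x_{n_j}^*(x) > 1 - 1/j$.
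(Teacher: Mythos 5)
There is a genuine gap at the final step of your ``Case B'' argument, and it sits exactly where the paper's proof has to do real work. The sets $U_m$ are only \emph{relatively} $w^*$-open in $J(x)$, whereas the sequence $(x_{n_k}^*)$ lives in $\B_{X^*}$ and need not lie in $J(x)$ at all (you only arrange $x_{n_k}^*(x)\to 1$). When you produce a cluster point $y^*\in U_m$, $y^*\neq x^*$, and note that $x_{n_k}^*\notin\overline{U_m}^{w^*}$ for $k\geq m$, you declare a contradiction with $y^*$ being a cluster point; but neither $U_m$ nor $\overline{U_m}^{w^*}$ is a neighbourhood of $y^*$ in $\B_{X^*}$ --- typically $J(x)$ has empty interior in $\B_{X^*}$, hence so does $\overline{U_m}^{w^*}$ --- and a sequence avoiding the closed set $\overline{U_m}^{w^*}$ can perfectly well $w^*$-cluster at a point of $U_m$ ``from outside $J(x)$''. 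The paper closes this hole before selecting the subsequence: it replaces each $U_n$ by a genuinely $w^*$-open set $\tilde U_n\subset\B_{X^*}$ with $U_n\subset\tilde U_n$ and $x^*\notin\overline{\tilde U_n}^{w^*}$ (Urysohn on the compact Hausdorff, hence normal, set $J(x)$ separating $\overline{U_n}^{w^*}$ from $x^*$, followed by a Tietze extension to $\B_{X^*}$), and then runs essentially your selection/diagonal argument against the $\tilde U_n$; the final contradiction then works because $\tilde U_{i_0}$ is an honest $\B_{X^*}$-neighbourhood of the putative cluster point. Without some version of this extension step your contradiction does not go through.

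Two smaller points. First, the hypothesis does not give $\overline{U_n}^{w^*}\subset U_n\cup\{x^*\}$, even after monotonising; it only gives $\overline{U_n}^{w^*}\subset\bigcup_m U_m$, hence $x^*\notin\overline{U_n}^{w^*}$, which is the fact you actually need. Second, your ``Case A'' is handled incorrectly: if a tail of $(x_n^*)$ lies in the $w^*$-compact set $\overline{U_m}^{w^*}$, then, since $x^*\notin\overline{U_m}^{w^*}$, the point $x^*$ cannot be a cluster point of that tail, so this case simply contradicts $x^*\in\overline{(x_n^*)}^{w^*}$ and is void; your conclusion there that ``the cluster point must be $x^*$'' is impossible. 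This also shows the dichotomy is unnecessary: since $\B_{X^*}\setminus\overline{U_m}^{w^*}$ and $\{y^*\in\B_{X^*}:y^*(x)>1-\varepsilon\}$ are $w^*$-open neighbourhoods of the cluster point $x^*$, each finite intersection of them contains infinitely many terms of the sequence, which is what legitimises the simultaneous selection you attempt in Case B (and is how the paper's diagonal procedure survives infinitely many deletions).
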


First we will comment on the assumptions regarding $J(x)$ in the $w^*$-topology, which is subsequently termed as $K$.
If $J(x)$ is norm-separable, then it is in particular hereditarily Lindel\"of in the $w^*$-topology.
The same is true if $K$ is metrizable. Recall that a compact hereditarily Lindel\"of space is first-countable
(see e.g. \cite[p.473]{bg}).

The following condition is also useful: there exists a sequence $(x_{n})\subset X\subset X^{**}$ such that
the sequence $y_{n}=x_{n}|_{[J(x)]}\in [J(x)]^*$ satisfies
\[\overline{[(y_{n})]}^{w^*}=[J(x)]^*,\]
that is, the sequence $(x_{n})$ separates $[J(x)]$. This condition, or the first-countability of $K$
imply, in particular, that there exists a sequence $(x_{n})\subset X$ such that for any $y^* \in K,\ y^* \neq x^*,$ there
is $n$ such that $(y^* -  x^*)(x_{n})\neq 0$. Observe that this condition in turn implies the topological assumption
of Proposition \ref{prop}.

\begin{proof}[Proof of Proposition \ref{prop}]
Fix a sequence $(U_{n})$ of $w^*$-open subsets of $K$ such that
\[\bigcup_{n} U_{i}=\bigcup_{n} \overline{U_{i}}^{w^*} = K\setminus \{x^*\}.\]
Let $(\tilde{U}_{n})$ be a sequence of $w^*$-open sets of $\B_{X^*}$ such that
$U_n \subset \tilde{U}_{n}\cap J(x)$ and $x^* \notin \overline{\tilde{U}_{n}}^{w^*}$ for $n\in\N$.
Indeed, $K$ is compact Hausdorff space, thus normal, hence completely regular, and so we may separate
$\overline{U_{n}}^{w^*}$ and $x^*$ by a continuous function $g\colon K\to [0,1]$. Then a Tietze extension
$\tilde{g}\colon \B_{X^*}\to [0,1]$ of $g$ yields a suitable $w^*$-open set $\tilde{U}_{n}$. Let $(V_{i})$ be a sequence defined by $V_{2i}=\tilde{U}_{i}$ and $V_{2i-1}=\{y^*\in \B_{X^*}:\ x(y^*)<1-1/i\}$.

We define a subsequence of $(x_{n}^*)$ by a diagonal argument as follows. First, we remove from the sequence
the functionals included in $V_{1}$. From the resulting sequence, say $s_{1}$, we remove the functionals included in
$V_2$, starting from the index third in order. Next, from the previously obtained sequence $s_{2}$ we pick a new sequence $s_3$ by removing the functionals included in $V_3$ starting from the fourth index. We proceed in this manner.
Observe that $s_{n}$ includes the $n$ first elements of $s_{n-1}$. Therefore $s_{n}$ converges coordinate-wise to a sequence $s$, which is a subsequence of $(x_{n}^*)$. Here $(x_{n})\supset s_{1}\supset s_{2}\supset \ldots \supset s$.
Let us denote this resulting sequence $s$ by $(x_{n_{k}}^*)$. By the construction $x_{n_{k}}^* (x)\to 1$ as $k\to \infty$.

Assume to the contrary that $x_{n_{k}}^*$ does not $w^*$-converge to $x^*$.
Thus there is a point $y\in X$ and a further subsequence $(n_{k_{j}})_{j}$ such that
$(x^* - x_{n_{k_{j}}}^* )(y)\to c\neq 0$ as $j\to \infty$. By the $w^*$-compactness of $\B_{X^*}$ we get that
there is $z^* \in J(x),\ z^* \neq x^* ,$ such that $z^*$ is a $w^*$-cluster point of $(x_{n_{k_{j}}}^*)_j$.
Then, according to the assumptions there is $i_{0}\in \N$ such that $z^* \in \tilde{U}_{i_{0}}$. By reviewing the construction of $s_{2i_{0}}$ we observe the impossibility of $z^*$ being a $w^*$-cluster point of $s_{2i_{0}}$. This contradiction finishes the proof.
\end{proof}

\begin{proposition}
Let $X$ be a Banach space which can be renormed to be both $\gds$ and $1$-$\scp$.
Then the coseparable subspaces of $X$ are preserved in taking countable intersections.
\end{proposition}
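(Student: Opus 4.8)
The plan is to show that if $Y_1, Y_2, \ldots \subset X$ are coseparable subspaces, then $Y := \bigcap_n Y_n$ is again coseparable, i.e.\ $X / Y$ is separable. The natural first move is to exploit the $1$-$\scp$ to replace the ambient space by a well-behaved separable piece. Since each $X/Y_n$ is separable, pick a countable set $D_n \subset X$ whose image is dense in $X/Y_n$; by $1$-$\scp$ choose a separable, contractively complemented subspace $Z \subset X$ containing $\bigcup_n D_n$, with norm-one projection $P \colon X \to Z$. The key reduction is that the quotient $X/(Z + Y)$ should be trivial (or at least separable), so that separability of $X/Y$ follows from separability of $Z$ and the fact that $Z \cap Y$ is a subspace of the separable space $Z$. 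Concretely, I would argue that for each $n$ we have $X = Z + Y_n$ (because $D_n \subset Z$ already maps onto a dense subset of $X/Y_n$, and $Y_n$ is closed, so $Z + Y_n = X$), and then try to push this through the intersection.

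The heart of the matter — and the step where I expect the real work to lie — is passing from ``$X = Z + Y_n$ for every $n$'' to ``$X/Y$ is separable''. This is exactly where the $\gds$ hypothesis must enter, since coseparability is manifestly \emph{not} preserved under arbitrary countable intersections in general Banach spaces. I would look for a Baire-category / differentiability argument: consider on $X$ (or on a suitable subspace) a continuous convex function built from the norms of the quotient maps $q_n \colon X \to X/Y_n$, say $f(x) = \sum_n 2^{-n} \operatorname{dist}(x, Y_n) / (1 + \operatorname{dist}(x,Y_n))$ or a similar bounded convex gauge, and exploit that $\gds$ forces Gateaux differentiability of $f$ on a dense (hence nonempty) set. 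The geometric content of differentiability of such a function at a point $x$ should be that the directions along which $f$ is ``flat'' — which are related to $\bigcap_n Y_n = Y$ — form a subspace of controlled size, which combined with the $1$-$\scp$ reduction forces $X/Y$ to be spanned modulo $Y$ by $Z$, hence separable.

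An alternative, possibly cleaner route is purely structural: use that a space with $1$-$\scp$ and $\gds$ has its separable complementation property interacting with smoothness so that one can find a \emph{single} separable $Z$ contractively complemented in $X$ with $Z \supset D_n$ for all $n$ \emph{and} $P(Y_n) \subset Y_n$ for all $n$ (i.e.\ the projection can be chosen to respect all the $Y_n$ simultaneously); such projectional compatibility is typically where $\gds$ is used, e.g.\ via the fact that $\gds$ spaces admit projectional skeletons or projectional resolutions compatible with prescribed countable data. If $P(Y_n) \subset Y_n$ for all $n$, then $P(Y) \subset Y$ as well, and since $X = Z \oplus \ker P$ with $\ker P \subset Y_n$ for each $n$ (because $x - Px \in Y_n$ whenever... — one checks $\ker P \subset Y_n$ from $X = Z + Y_n$ and $P|_Z = \mathrm{id}$, $D_n \subset Z$), we get $\ker P \subset Y$, whence $X/Y \cong Z/(Z \cap Y)$ is separable. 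The main obstacle either way is justifying the compatibility of the complementation with \emph{all} the $Y_n$ at once; this is the nontrivial use of $\gds$, and I would expect the proof to invoke a projectional-skeleton characterization of $\gds \cap 1$-$\scp$ spaces, or to build the projection by a transfinite/iterative closure argument that absorbs the countably many $D_n$ and stabilizes each $Y_n$.
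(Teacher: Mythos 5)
Your overall skeleton is the right one, and it matches the paper's: produce a single separable, contractively complemented $E\subset X$ with norm-one projection $P$, show that $\ker P$ is contained in every $Y_n$, and conclude that $\bigcap_n Y_n\supset \ker P$ is coseparable because $X\mod \ker P\cong E$ is separable. However, neither of your two proposed mechanisms for the crucial inclusion $\ker P\subset Y_n$ actually works as written. First, your claim that $Z+Y_n=X$ because $q_n(D_n)$ is dense in $X\mod Y_n$ is false: the image of a closed subspace under a quotient map need not be closed, so density of $q_n(Z)$ in $X\mod Y_n$ does not give surjectivity. Second, and more importantly, the compatibility $P(Y_n)\subset Y_n$ for all $n$ simultaneously is exactly the point you concede you cannot justify; $1$-$\scp$ alone gives no control over how $P$ interacts with the prescribed subspaces $Y_n$, and $\gds$ does not provide projectional skeletons adapted to arbitrary closed subspaces. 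Your first route (a convex gauge built from $\dist(\cdot,Y_n)$) is too vague to assess and has its own problems (e.g.\ $t\mapsto t/(1+t)$ is concave, so your candidate function is not convex), and you do not say how differentiability at a single point would control the quotient $X\mod\bigcap_n Y_n$.

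The idea you are missing is how $\gds$ is actually used: not through the projection, but through \emph{smooth points and their supporting functionals}. Since $\gds$ gives a dense set of Gateaux smooth points, one chooses smooth points $z^{(n)}_k\in \S_X$ whose images $q_n(z^{(n)}_k)$ are dense in $\S_{X\mod Y_n}$, together with norm-one functionals $f^{(n)}_k$ attaining their norm at $z^{(n)}_k$, arranged (by a back-and-forth recursion) so that $\{f^{(n)}_k\}_k$ $1$-norms $[z^{(n)}_k:\ k\in\N]$; this forces $\B_{Y_n^{\bot}}\subset\overline{\conv}^{w^*}(f^{(n)}_k)$ and hence $\bigcap_k\ker f^{(n)}_k\subset Y_n$. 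Now take $E$ separable, containing all the $z^{(n)}_k$, with a norm-one projection $P$. Since $\|f^{(n)}_k\circ P\|\le 1$ and $(f^{(n)}_k\circ P)(z^{(n)}_k)=f^{(n)}_k(z^{(n)}_k)=1$, both $f^{(n)}_k$ and $f^{(n)}_k\circ P$ support the ball at the smooth point $z^{(n)}_k$, so uniqueness of the supporting functional gives $f^{(n)}_k\circ P=f^{(n)}_k$, i.e.\ $\ker P\subset\ker f^{(n)}_k$ for all $n,k$, and therefore $\ker P\subset\bigcap_n Y_n$. This is the step where smoothness does the work that you were hoping to extract from a projectional compatibility assumption; without it your argument has a genuine gap.
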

\begin{proof}
Let $X$ satisfy the assumption about the renorming. Since the conclusion of the result obviously does not depend on equivalent renormings, we may restrict our attention to a given $1$-$\scp$ Gateaux differentiability space $X$.
Let $(Z_{n})$ be a sequence of coseparable subspaces of $X$. We denote by $q_{n}\colon X \to X\mod Z_n$
the canonical quotient mappings.

By applying the fact that $X\mod Z_{n}$ are separable and the $\gds$ property
we may choose subsets $\{z^{(n)}_{k}\in \S_{X}:\ n,k\in \N\}$ and $\{f^{(n)}_{k}\in \S_{X^*}:\ n,k\in \N\}$ such that
\begin{enumerate}
\item[(a)]{Each $z^{(n)}_{k}$ is a smooth point.}
\item[(b)]{$\S_{X\mod Z_{n}}\subset \overline{q_{n}(\{z^{(n)}_{k}:\ k\in \N\})}$ for each $n$.}
\item[(c)]{Each $f^{(n)}_{k}$ attains its norm at $z^{(n)}_{k}$.}
\item[(d)]{$\{f^{(n)}_{k}:\ k\in \N\}$ $1$-norms $[z^{(n)}_{k}:\ k\in \N]$ for each $n$.}
\end{enumerate}
Indeed, the last condition is obtained by applying a back-and-forth recursion of countable length.
From $(b)$ and $(d)$ we obtain that $\B_{Z_{n}^\bot}\subset \overline{\conv}^{w^{\ast}}(f_{k}^{(n)}:\ k\in \N)$ for $n$.
This means that
\[\bigcap_{k}\ker f^{(n)}_{k} \subset Z_{n}\quad \text{for}\ n\in\N.\]

According to $1$-$\scp$ there exists a separable subspace $E\subset X$ with $\{z^{(n)}_{k}:\ n,k\in \N\}\subset E$ and
a norm-$1$ linear projection $P\colon X\to E$.

According to the Gateaux smoothness of the points $z^{(n)}_{k}$ we have that
\[f^{(n)}_{k}\circ P = f^{(n)}_{k}\quad \mathrm{for}\ n,k\in \N,\]
since the functionals are norm attaining at the corresponding points.
Thus $\ker P\subset \bigcap_{n,k}\ker f^{(n)}_{k}$ where $\ker P$ is
clearly a coseparable subspace. On the other hand, $\bigcap_{k}\ker f^{(n)}_{k}\subset Z_{n}$ for $n$, so that
$\bigcap_{n} Z_{n}$ is necessarily a coseparable subspace of $X$.

\end{proof}

\begin{proof}[Proof of Theorem \ref{thm: main}]
Let $Y\subset X$ be a separable subspace. Our aim is to show that $Y^{*}$ is separable.

Observe that $\B_{Y^{\ast}}$ is separable and metrizable in the $w^{*}$-topology.
It is easy to see that $\S_{Y^*}$ is such as well. Let us pick a $w^*$-dense set
$\{y_{n}^* :\ n\in \N\}\subset \S_{Y^*}$. Our aim is to show that
\[\overline{\conv}(\{y_{n}^* :\ n\in \N\})=\B_{Y^*}.\]
Towards this, fix a norm-attaining functional $y^* \in \S_{Y^*}$. Let $y\in \S_{Y}$ be such that $y^* (y)=1$.

By using $w^{\ast}$-compactness and $w^*$-metrizability of the dual unit ball we obtain that there exists
a subsequence $(y^{*}_{n_{k}})_{k\in \N}\subset \{y_{n}^* :\ n\in \N\}$ which $w^*$-converges to $y^*$,
since the countable set was chosen to be $w^*$-dense.

Our aim is to find a further subsequence of $(n_{k})$, indexed by $m$,
and a sequence $(z^{*}_{m})_{m\in \N}\subset \S_{X^*}$ and $z^* \in \S_{X^{\ast}}$, which are Hahn-Banach extensions of
$(y^{*}_{n_{k_{m}}})_{m\in \N}$ and $y^*$, respectively, such that
$z^{*}_{m}\stackrel{w^*}{\longrightarrow} z^*$ in $X^*$ as $m\to \infty$.
Namely, in such a case we obtain by the assumptions that $z^{*}_{m}\stackrel{w}{\longrightarrow} z^*$
and then Mazur's theorem yields that $z^* \in \overline{\conv}(z^{*}_{m}:\ m\in \N)$.
This means that $y^* \in \overline{\conv}(y^{*}_{n_{k_{m}}}:\ m\in \N)$, and since $y^*$ was an arbitrary norm-attaining
functional, we obtain by the Bishop-Phelps theorem that $\overline{\conv}(y^{*}_{n}:\ n\in \N)=\B_{Y^*}$.

Fix Hahn-Banach extensions $(x^*_{k})$ and $x^*= x_{0}^*$ of $(y^*_{n_{k}})$ and $y^*$, respectively.
Observe that $x^*_k (y)\to 1=x^*_{0}(y)$ as $k\to \infty$.
To finnish the proof by finding the appropriate subsequence $(n_{k_{m}})_{m}$ and Hahn-Banach extensions, we
proceed in parallel steps regarding the assumptions $(i)$-$(iv)$.

\noindent {\it Assumption $(i)$:} The set $J(y)$ is $w^*$-first-countable. Proposition \ref{prop} and the discussion following yield that there is a subsequence $(k_{m})$ and $z^* \in J(y)\subset \S_{X^*}$ such that
$x^*_{k_{m}} \stackrel{w^*}{\longrightarrow} z^*$ as $m\to \infty$.
Note that $z^*$ is a Hahn-Banach extension of $y^*$, since by the selection of $(x^*_{k_{m}})$ it holds that
$x^*_{k_{m}}|_{Y}\stackrel{w^*}{\longrightarrow}y^*$ in $Y^{\ast}$ as $m\to\infty$.
We write $z^*_{m}=x^*_{k_{m}}$ to get the required extensions.

\noindent {\it Assumption $(ii)$:} The coseparable subspaces of $X$ are preserved in countable intersections.
Observe that $Z=\bigcap_{k\geq 0}\ker x^*_{k}\subset X$ is coseparable. Thus we may consider
$\{x^*_{k}:\ k\geq 0\}\subset \B_{(X\mod Z)^{\ast}}=\B_{Z^{\bot}}$, where the unit ball is $w^*$-compact and $w^*$-metrizable. Thus we may extract a $w^*$-converging subsequence $(x^*_{k_{m}})$. Let $z^*=w^*$-$\lim_{m\to \infty}x^*_{k_{m}}$. We put $z^*_{m}=x^*_{k_{m}}$ for $m$. By the selection of the sequence $(n_{k_{m}})$ it is again clear that $z^*\in \S_{X^*}$ is a Hahn-Banach extension of $y^* \in \S_{Y^*}$.

\noindent {\it Assumption $(iii)$:} $X$ has $1$-$\scp$. Let $P\colon X\to E$ be a norm-$1$ projection where $E\subset X$ is a separable subspace containing $Y$. Similarly as in the previous step, we observe that
$\{x^*_{k}\circ P:\ k\geq 0\}\subset \B_{(X\mod \ker P)^{\ast}}$ contains a subsequence $(z^*_{m})=(x^*_{k_{m}}\circ P)$ and
$z^*=z^* \circ P \in \S_{(X\mod \ker P)^{\ast}},\ z^*(y)=1,$ such that $x^*_{k_{m}}\circ P \stackrel{w^*}{\longrightarrow} z^*$ in $(X\mod \ker P)^*$ as $m\to \infty$. Since $Y\subset E$, we see similarly as above that $(x^*_{k_{m}}\circ P)|_{Y}=y^*_{n_{k_{m}}}$ for $m\in\N$. This yields that $z^* |_{Y}=y^*$.

\noindent {\it Assumption $(iv)$:} The dual unit ball $\B_{X^*}$ is $w^*$-sequentially compact.
This is the case, for instance, if $X$ is GDS (see \cite[Thm. 2.1.2]{fabian_book}).
Thus there is a subsequence $(k_{m})\subset \N$ and $z^*\in \S_{X^*}$ such that $z^*(y)=1$ and $w^*$-$\lim_{m\to\infty}x^*_{k_m}=z^*$. Clearly $z^*|_{Y}=y^*$ and
we put $z^*_{m}=x^*_{k_m}$.
\end{proof}

Finally, we will apply Theorem \ref{thm: main} to give a new, rather simple proof for the Josefson-Nissenzweig Theorem
in a subclass of Banach spaces.

\begin{proposition}
Let $X$ be an infinite-dimensional Banach space which is not an Asplund space and satisfies one of the conditions $(i)$-$(iv)$ in Theorem \ref{thm: main}. Then there is a sequence $(z_{k}^*)\subset \S_{X^*}$ such that $w^*$-$\lim_{k\to\infty}z_{k}^* =0$ and $0\notin \overline{\conv}(z_{k}^*)$.
\end{proposition}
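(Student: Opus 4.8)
The plan is to argue by contradiction through the renorming: if $X$ satisfies one of $(i)$–$(iv)$ and $0 \in \overline{\conv}(z_k^*)$ were forced for every $w^*$-null sequence in $\S_{X^*}$, then the presence of such sequences together with Theorem~\ref{thm: main} would push $X$ into being Asplund. More concretely, I would first recall that $X$ being non-Asplund and infinite-dimensional means it has a separable subspace $Y$ whose dual $Y^*$ is non-separable; by the classical Josefson--Nissenzweig theorem applied to $Y$, there is a sequence $(y_k^*) \subset \S_{Y^*}$ with $y_k^* \xrightarrow{w^*} 0$ in $Y^*$. The task then is to lift this sequence to $\S_{X^*}$ in a controlled way, and here the conditions $(i)$–$(iv)$ are exactly what make the lifting possible in a $w^*$-convergent fashion, paralleling the four-case argument in the proof of Theorem~\ref{thm: main}.

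Next I would carry out that lifting. Fix Hahn--Banach extensions $x_k^* \in \S_{X^*}$ of $y_k^*$. Under each of assumptions $(i)$–$(iv)$ one extracts (exactly as in the corresponding paragraph of the proof of Theorem~\ref{thm: main}, but now with the target functional being $0$ rather than a norm-attaining $y^*$) a subsequence $(x_{k_m}^*)$ that $w^*$-converges in $X^*$ to some $z^*$ whose restriction to $Y$ is the $w^*$-limit of $y_{k_m}^*$, namely $0$. Set $z_m^* = x_{k_m}^*$. These lie on $\S_{X^*}$ and restrict on $Y$ to a sequence $w^*$-converging to $0$. The one subtlety is that the extensions $x_k^*$ need not themselves be $w^*$-null on all of $X$; this is precisely why we pass to a subsequence and allow a nonzero $w^*$-limit $z^*$. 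If $z^* = 0$ we are in good shape directly; if $z^* \neq 0$, I would instead work with $z_m^* - z^*$, or more cleanly observe that it suffices to produce a normalized $w^*$-null sequence with $0$ outside the closed convex hull, and for that one can restrict attention to the quotient or complemented separable piece already produced in the proof of Theorem~\ref{thm: main}, where the lifted functionals genuinely $w^*$-converge.

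The final step is the separation assertion $0 \notin \overline{\conv}(z_k^*)$. Here I would argue that if $0 \in \overline{\conv}(z_k^*)$, then since the $z_k^*$ all agree on $Y$ with a sequence $w^*$-converging to $0$, and since $0 \in \overline{\conv}$ is a statement that survives restriction to $Y$, we would get $0 \in \overline{\conv}(y_{k_m}^*)$ in $Y^*$. But this contradicts the known form of Josefson--Nissenzweig on $Y$ in a more careful formulation: what we actually need is that for a \emph{non-Asplund} separable $Y$ one can choose the $w^*$-null normalized sequence so that $0$ is separated from its convex hull — equivalently, the norm and $w^*$ topologies differ in a way witnessed by a single functional. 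Alternatively, and more in the spirit of this note, I would invoke Theorem~\ref{thm: main} in contrapositive: were $0 \in \overline{\conv}(z_k^*)$ for every normalized $w^*$-null sequence, one could run the argument of the Theorem's proof to show $Y^*$ is separable for every separable $Y \subset X$, hence $X$ is Asplund, contradicting the hypothesis.

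The main obstacle I anticipate is making the separation clause $0 \notin \overline{\conv}(z_k^*)$ precise and ensuring the contradiction is genuinely with non-Asplundness rather than circular. The lifting steps are essentially a re-run of the proof of Theorem~\ref{thm: main} and should be routine; the delicate point is choosing the starting $w^*$-null sequence in $Y^*$ with the convex-hull separation built in, or else re-deriving that separation as a consequence of $Y^*$ being non-separable. I would resolve this by noting that if $0 \in \overline{\conv}(y_k^*)$ for \emph{every} normalized $w^*$-null sequence in $Y^*$, then a Mazur-type argument shows every such sequence is essentially negligible, and combined with the Bishop--Phelps and Mazur machinery used in Theorem~\ref{thm: main} this forces $\overline{\conv}\{y_n^* : n \in \N\} = \B_{Y^*}$ for a countable $w^*$-dense set, i.e. $Y^*$ separable — the desired contradiction.
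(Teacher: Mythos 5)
Your overall plan---lift a sequence from a separable non-Asplund subspace $Y$ via the case analysis $(i)$--$(iv)$, pass to a $w^*$-convergent subsequence of Hahn--Banach extensions, subtract the limit, and separate---matches the second half of the paper's argument, but your starting point leaves a genuine gap. The Josefson--Nissenzweig theorem gives a normalized $w^*$-null sequence $(y_k^*)\subset \S_{Y^*}$, but says nothing about $0$ lying outside its closed convex hull; indeed, whenever such a sequence is also weakly null, Mazur's theorem forces $0\in\overline{\conv}(y_k^*)$, so the convex-hull separation is exactly the part of the statement that does not follow from $w^*$-nullity. Since your final step reduces the claim $0\notin\overline{\conv}(z_k^*)$ to ``$0\notin\overline{\conv}(y_{k_m}^*)$ in $Y^*$'', which J--N does not provide, the argument does not close. (Two further problems: invoking J--N is circular in spirit, since the paper offers this proposition precisely as a new proof of J--N on this subclass; and the lifting in case $(i)$ uses the duality set $J(y)$ of a point $y$ with $y^*(y)=1$, so the target functional must be norm-attaining of norm one---the target $0$ does not qualify.)

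The paper builds the separation in at the source instead. Take a countable $w^*$-dense set $\{y_n^*\}\subset\S_{Y^*}$; since $Y^*$ is non-separable, $\overline{\conv}(\{y_n^*\})\neq\B_{Y^*}$, so by Bishop--Phelps there is a \emph{norm-attaining} $y^*\in\S_{Y^*}$ with $y^*\notin\overline{\conv}(\{y_n^*\})$, while $w^*$-metrizability of $\B_{Y^*}$ yields a subsequence $y_{n_k}^*\stackrel{w^*}{\longrightarrow}y^*$. One then lifts as in the proof of Theorem~\ref{thm: main} to get extensions with $x_{n_k}^*\stackrel{w^*}{\longrightarrow}x^*$ in $X^*$, sets $u_k^*=x_{n_k}^*-x^*$ (which is $w^*$-null, and $0\notin\overline{\conv}(u_k^*)$ because restriction to $Y$ is norm-nonincreasing and carries $\conv(u_k^*)$ onto $\conv(y_{n_k}^*)-y^*$), separates by $f\in X^{**}$ with $f(u_k^*)>1$, and normalizes using $\|u_k^*\|\leq 2$. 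Your fallback---running Theorem~\ref{thm: main} ``in contrapositive'' under the hypothesis that $0\in\overline{\conv}$ of every normalized $w^*$-null sequence---does not substitute for this: the Theorem's machinery needs every norm-attaining $y^*$ to lie in the closed convex hull of sequences $w^*$-converging to it (that is where wKK$^*$ enters there, and wKK$^*$ is not assumed in this proposition), and a hypothesis about sequences converging to $0$ does not translate to that statement after normalization, since the normalizing factors destroy the affine correspondence. The repair is to adopt the Bishop--Phelps/non-separability construction of the pair $(y^*,(y_{n_k}^*))$ and translate by $y^*$ at the end, as above.
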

\begin{proof}
Let $Y\subset X$ be a separable subspace with non-separable dual.
Thus $Y$ is not Asplund but $\B_{Y^{\ast}}$ is $w^*$-metrizable. By using Theorem \ref{thm: main} and its proof we conclude that there exists
a sequence $(y_{n}^*)\subset \S_{Y^*}$ and $y^* \in \S_{Y^*}$ such that
$y_{n}^* \stackrel{w^*}{\longrightarrow} y^*$ as $n\to \infty$ but $y^*\notin \overline{\conv}(y_{n}^* )$.
By proceeding similarly as in the proof of Theorem \ref{thm: main} we may find a subsequence $(n_k)$ and
Hahn-Banach extensions $x_{n_k}^*$ and $x^*$ of $y_{n_k}^*$ and $y^*$, respectively, such that
$x_{n_k}^* \stackrel{w^*}{\longrightarrow} x^*$ in $X^*$ as $k\to \infty$.
Define $u_{k}^*=x_{n_k}^* - x^*$ for $k$. This is a $w^*$-null sequence.
Observe that $0\notin \overline{\conv}(u_{k}^*)$ since $y^*\notin \overline{\conv}(y_{n}^* )$. By the geometric Hahn-Banach theorem there is a separating functional $f\in X^{**}$ such that, say, $f(u_{k}^*)>1$ for $k$. We define the required sequence by $z_{k}^* =u_{k}^* / \|u_k^* \|$ where $\|u_k^*\|\leq 2$ for all $k\in \N$. Observe that $f(z_k^*)> 1/2$ for $k\in N$, so that $0\notin \overline{\conv}(z_{k}^*)$.
\end{proof}

\subsection{Final remarks}

Regarding the abovementioned problem of Godefroy, the example \cite{fe} of countably tight compact $K$ without any convergent sequence provides a possible direction for searching a counterexample $C(K)$ space. 

\begin{problem}
Find examples with KK failing K, resp. wKK$^*$ failing K$^*$.
Consider the long James space $J(\Gamma)$. Does it have the K$^*$?
\end{problem}

Another related subject is the following weakenning of the well-known problem whether
every dual to a separable space without a copy of $\ell_1$ is LUR renormable (see e.g. \cite{hmo}).

\begin{problem}
Let $X$ be separable $\ell_1\not\hookrightarrow X$. Does $X^*$ admit a KK renorming?
\end{problem}

\end{document}